\title{} \author{} \date{}
\numberwithin{equation}{section} 
\newcommand{\Cl}{\mathop{\mathrm {Cl}}\nolimits}
\newcommand{\Count}{\mathop{\mathrm {Count}}\nolimits}
\newcommand{\Int}{\mathop{\mathrm {Int}}\nolimits}
 \newtheorem{theorem}{Theorem}
 \newtheorem{cor}{Corollary}
 \theoremstyle{definition}
  \newtheorem{example}{Example}
 \theoremstyle{remark}
\begin{document}
\thispagestyle{empty}
\begin{center}
{\large \bf  Relations between several topologies obtained by the ideal of countable sets\footnote{This research was supported by the Science Fund of the Republic of Serbia, Grant No. 7750027: Set-theoretic, model-theoretic and Ramsey-theoretic phenomena in mathematical structures: similarity and diversity–SMART
}

} \vspace*{3mm}

{\bf Aleksandar Pavlovi\'c\footnote{Department of Mathematics and Informatics, Faculty of Sciences, University of @Novi Sad, Serbia, e-mail: \href{mailto:apavlovic@dmi.uns.ac.rs}{apavlovic@dmi.uns.ac.rs}, ORCID iD: \href{https://orcid.org/0000-0001-5001-6869}{orcid.org/0000-0001-5001-6869}}}
\end{center}

\begin{abstract}

The main goal of this paper is to investigate relations between topologies obtained by: $\theta$-open sets,  $\omega$-open sets,  $\theta_\omega$-open sets, local function, and local closure function with ideal of the countable sets. As the result we will obtain some new the relations which hold in general, and, as the ultimate goal, equality of the topology of $\omega$-open sets and topology obtained by the local function with ideal of countable sets.
\\[2mm] {\it AMS Mathematics  Subject Classification $(2010)$}:
54A10, 
54A05,  
54B99, 
54E99 
\\[1mm] {\it Key words and phrases:} ideal topological space, local function, local closure function, $\theta$-open sets,
 $\theta$-closure,  $\omega$-open sets,  $\theta_\omega$-open sets, ideal of countable sets

\end{abstract}

\section{Introduction}

The idea of ``idealizing" a topological space can be found in some classical texts of Kuratowski \cite{KURold, KUR} and Vaidyanathaswamy \cite{Vaid}. Some early applications of ideal topological spaces can be found in various branches of mathematics, like a generalization of Cantor-Bendixson theorem by Freud \cite{Freud}, or  in measure theory by Scheinberg \cite{Sein}. In 1990  Jankovi\'c and Hamlett \cite{JH} wrote a paper in which they, among  their results, included many other results in this area using modern notation, and logically and systematically arranging them. This paper rekindled the interest in this topic, resulting in many generalizations of the ideal topological space and many generalizations of the notion of open sets, like in   papers of Jafari  and  Rajesh \cite{JR}, and Manoharan and   Thangavelu \cite{MT}.

In 1966 Veli\v{c}ko  \cite{Veli} introduced the  notions of $\theta$-open and $\theta$-closed sets, and also a $\theta$-closure, examining $H$-closed spaces in terms of an arbitrary filterbase. A space $X$ is called $H$-closed if every open cover of $X$ has a finite subfamily whose closures cover $X$. It turned out that $\theta$-open sets are related to the already known notion of $\theta$-continuity, introduced  in 1943 by Fomin \cite{Fomin}.  In 1975 Dickman and Porter continued the study of $H$-closed spaces using  $\theta$-closed sets proving that an $H$-closed space is not a countable union of nowhere dense $\theta$-closed sets. Also they proved that  every $H$-closed space with ccc is not a union of less than continuum many $\theta$-closed nowhere dense sets if and only if Martin's axiom holds. In 1982 Long and Herrington applied $\theta$-open sets to describe faintly continuous functions (see \cite{LongHerr}), which are further investigated and compared with other types of continuity by Njamcul and Pavlovi\'{c} (see \cite{NjamPavTurkish}).  In 1980, Jankovi\'c \cite{Jan} proved that a space is Hausdorff if and only if every compact set is $\theta$-closed. Recent  applications of $\theta$-open sets can be found in the paper of Caldas,  Jafari and Latif \cite{CJL}, or in the paper of  Cammaroto, Catalioto,  Pansera and Tsaban \cite{CCPT}.

In \cite{ON}, Al-Omari and Noiri introduced the  local closure function as a generalization of the $\theta$-closure and the   local function in an ideal topological space. They proved some basic properties for the local closure function, and also introduced two new topologies obtained from the original one using the local closure function.

The family of $\omega$-open sets was introduced by Hdeib \cite{Hdeib} in 1982. Within few years from 2013 till 2015, we had several applications of this notion \cite{ Car2, Car3,  Car4, Car5, Car1, Dar, Zor}, mainly in the field of variations of continuous functions.  The family of $\theta_\omega$-open sets was introduced in 2017 by Al Ghour and Irshedat \cite{AlGIrsh}, as a $\theta$ modification of $\omega$-open sets. This research continued in \cite{AlGElIssa}.

Looking at the definitions of topologies defined by local and local closure function with ideal of countable sets, on the one side, and topologies of $\omega$ and $\theta_\omega$ open sets, it is not hard to notice similarities. The goal of this paper is to investigates relations of this topologies.

\section{Definitions and notations}

We will use the following notation. If  $\langle X, \tau\rangle$ is  a topological space,  $\tau(x)$ will be the family of open neighbourhoods at the point $x$, $\Cl(A)$   the closure of the set $A$ and $\Int(A)$ its interior.

  $\omega$   will denote the set of natural numbers including $0$. $\aleph_0$ will be the carinal number of the set of naturals and $\mathfrak{c}$ the cardinal number of the set of reals.

If $X$ is a nonempty set, a family $\mathcal{I}\subset P(X)$ satisfying

{\leftskip 5mm (I0) $\emptyset \in \mathcal{I}$,

(I1) If $A \in  \mathcal{I}$  and $B \subseteq A$, then $B \in \mathcal{I}$,

(I2) If $A, B \in \mathcal{I}$, then $A \cup B \in \mathcal{I}$,

}

\noindent is called an \textbf{ideal} on $X$. If $X \not \in \mathcal{I}$ (i.e.,\ $P(X) \neq \mathcal{I}$), then $\mathcal{I}$ is \textbf{proper}. If there exists $A \subseteq X$ such that $B \in \mathcal{I}$ if $B \subseteq A$, then $\mathcal{I}$ is a \textbf{principal ideal}. The family of countable sets forms an ideal denoted by $\Count$.

\subsubsection*{Local function}

If $\langle X, \tau\rangle$ is a topological space and $\mathcal{I}$ an ideal on $X$, then a triple $\langle X, \tau, \mathcal{I}\rangle$ is called an \textbf{ideal topological space}.
If $\langle X, \tau, \mathcal{I}\rangle$ is an ideal topological space, then the mapping $A \mapsto A^*_{(\tau, \mathcal{I})}$ (briefly $A^*$) defined by
$$A^*_{(\tau, \mathcal{I})}=\{x \in X: A \cap U \not \in \mathcal{I} \mbox { for each }  U \in \tau(x)\}$$
is called the \textbf{local function} (see \cite{KUR}).

The local function has the following properties (see \cite{JH}):

{\leftskip 5mm (1) $A \subseteq B \Rightarrow A^* \subseteq B^*$;

(2) $A^*=\Cl(A^*)\subseteq \Cl(A)$;

(3) $(A^*)^*\subseteq A^*$;

(4) $(A\cup B)^*=A^* \cup B^*$

(5) If $I \in \mathcal{I}$, then $(A\cup I)^*=A^*=(A\setminus I)^*$.

}

Also by $\Cl^*(A)=A \cup A^*$   a closure operator on $P(X)$ is defined and it generates a topology $\tau^*(\mathcal{I})$  (briefly $\tau^*$) on $X$ where $$\tau^*(\mathcal{I})=\{U \subseteq X: \Cl^*(X \setminus U)=X \setminus U\}.$$

It is easy to see that $\tau \subseteq \tau^*\subseteq P(X)$.

\subsubsection*{$\theta$-open sets}

A set $U$ is $\theta$-open if for each $x \in U$ there exists $V \in \tau(x)$ such that $\Cl(V)\subseteq U$. $\theta$-interior is defined by
$\Int_\theta(A)=\bigcup \{U: U \subset A, U\mbox{ is }\theta\mbox{-open}\}$. It is known that $\theta$-interior does not have to be a $\theta$-open set, but $U$ is $\theta$-open if and only if $\Int_\theta(U)=U$.  A set $A$ is $\theta$-closed if $X \setminus A$ is $\theta$-open. Also, a set $A$ is $\theta$-closed if and only if  it is equal to its $\theta$-closure $\Cl_\theta(A)$ defined by
$$\Cl_\theta(A)=\{x \in X: \Cl(U) \cap A \neq \emptyset\mbox{ for each }U \in \tau(x)\}.$$ Similarly as in the case of $\theta$-interior, $\theta$-closure of a set does not have to be $\theta$-closed, but it is always a closed set.  The family of $\theta$-open sets generates a topology $\tau_\theta$ on $X$ and we have that $\tau_\theta \subseteq \tau$. If $\langle X, \tau\rangle$ is a regular space, then every open set is $\theta$-open and therefore $\tau=\tau_\theta$.

\vskip 3mm

\subsubsection*{Local closure function}

Generalizing the notions of local function and $\theta$-closure (see \cite{LongHerr})   Al-Omari and   Noiri \cite{ON}  defined the \textbf{local closure function} in an ideal topological space $\langle X, \tau, \mathcal{I}\rangle$ which assigns to each set $A$ the set  $\Gamma_{(\tau,\mathcal{I})}(A)$ (briefly $\Gamma(A)$), where
$$\Gamma_{(\tau,\mathcal{I})}(A)=\{x \in X: \Cl(U) \cap A \not \in \mathcal {I}\mbox{ for each }U \in \tau(x)\}.$$

The local closure function has the  following properties (see \cite{ON}):

{\leftskip 5mm (1) $A^*\subseteq  \Gamma(A)$;

(2) $\Gamma(A)=\Cl(\Gamma(A))\subseteq \Cl_\theta(A)$;

(3) $\Gamma(A \cup B)=\Gamma(A) \cup \Gamma(B)$;

(4) $\Gamma(A \cup I)=\Gamma(A) =\Gamma(A \setminus I)$ for each $I \in \mathcal{I}$.

}

\noindent Let us notice that   between $\Gamma(\Gamma(A))$ and $\Gamma(A)$  both inclusions are possible.

Using $\Gamma$, Al-Omari and Noiri  \cite{ON} defined an operator $\psi_\Gamma$ by
$\psi_\Gamma(A)=X \setminus \Gamma(X \setminus A)$ which satisfies the following:

{\leftskip 5mm (1) $\psi_\Gamma(A)=\Int(\psi_\Gamma(A))$;

(2) $\psi_\Gamma(A \cap B)= \psi_\Gamma(A) \cap \psi_\Gamma(B)$;

(3) $\psi_\Gamma(A \cup I)=\psi_\Gamma(A)=\psi_\Gamma(A \setminus I)$ for each $I \in \mathcal{I}$;

(4) If $U$ is $\theta$-open, then $U\subseteq\psi_\Gamma(U)$.

}

\noindent Using this,  a topology $\sigma$ is defined in \cite{ON} in the following way:
$$A \in \sigma \Leftrightarrow A\subseteq \psi_\Gamma(A).$$
Also there holds
$$A \in \sigma \Leftrightarrow \Gamma(X \setminus A)\subseteq X \setminus A.$$

Obviously $\tau_\theta \subseteq \sigma$.

Again  in \cite{ON}, using $\psi_\Gamma$, is defined   another topology $\sigma_0$   by
$$A \in \sigma_0 \Leftrightarrow A\subseteq \Int(\Cl(\psi_\Gamma(A))).$$

Since $\psi_\Gamma(A)$ is open,  $\psi_\Gamma(A)\subseteq \Int(\Cl(\psi_\Gamma(A)))$  and we have
$$\tau_\theta \subseteq \sigma \subseteq \sigma_0.$$

More about local closure function can be found in \cite{pav} and \cite{NjamPavPeriodica}.

\subsubsection*{$\omega$-open sets}

In topological space $\langle X, \tau \rangle$ set $A$ which contains all of its $\omega$-accumulation points is called an $\omega$-closed set (see  \cite{Hdeib}). Its complement is called an $\omega$-open set. The family of all $\omega$-open sets is a topology on $X$ denoted by $\tau_\omega$. It is known that $\tau \subseteq \tau_\omega$. It is proved in the same paper that
$$A\in \tau_\omega \Leftrightarrow \forall x \in A \; \exists U \in \tau(x) \; |U \setminus A| \leq \aleph_0.$$

The closure operator in $\tau_\omega$ is denoted by $\overline{A}^\omega$. For more on $\omega$-open sets see \cite{al1999certain}.

\subsubsection*{$\theta_\omega$-open sets}

In 2017 Al Ghour and Irshedat in \cite{AlGIrsh} introduced $\theta_\omega$-open sets as a $\theta$ modification of $\omega$-open sets. They defined $\theta_\omega$-closure, denoted by $\Cl_{\theta_\omega}$ by
$$x \in \Cl_{\theta_\omega}(A)\Leftrightarrow \forall U \in \tau(x)\: \overline{U}^{\omega} \cap A \neq \emptyset.$$
Set $A$ is $\theta_\omega$-closed iff $\Cl_{\theta_\omega}(A)=A$, and $\theta_\omega$-open if its complement is $\theta_\omega$-closed. They proved that $\theta_\omega$-open sets forms topology, and denoted it by $\tau_{\theta_\omega}$.

They proved  $\overline{A}\subseteq \Cl_{\theta_\omega}(A) \subseteq \Cl_{\theta}(A)$, that each $\theta$-closed set is ${\theta_\omega}$-closed and that ${\theta_\omega}$-closed set is closed in $\tau$. They also proved  that every countable open in $\tau$ set is also open in $\tau_{\theta_\omega}$.

 For more details on $\theta_\omega$-open sets check \cite{AlGElIssa}.

\section{Currently known relations between topologies}

It is easy to notice that definitions of topologies $\tau_\omega$ and $\tau_{\theta_\omega}$  are very similar to the definitions of $\tau^*$ and $\sigma$ in the  case when we take for the ideal the ideal of mostly countable set. The goal of this paper is to find relations between topologies $\tau_\omega$ and $\tau_{\theta_\omega}$, on the one side, and $\tau^*(\Count)$, $\sigma(\Count)$ and $\sigma_0(\Count)$, on the other side. Since, we will only deal with the ideal of the most countable sets, the later topologies will be briefly denoted just by $\tau^*$, $\sigma$ and $\sigma_0$, respectably.

Considering the graph of currently known relations, where arrow from  $\tau_1$ to $\tau_2$ mans "$\tau_1 \subseteq \tau_2$ and there exists a topology where strict inclusion holds", the left-hand-side of the graph follows from the comments in the subsection considering $\tau_{\theta_\omega}$. The right-hand-side  of the graph is proved \cite{ON} and the possible  strict inclusion between $\sigma$ and $\sigma_0$ in \cite{pav}. The only arrow between the two side is the consequence of the transitivity.

Besides this, it is known that there are no arrows between $\sigma_0$ and $\tau$.

\begin{center}
\begin{tikzpicture}[commutative diagrams/every diagram, scale=1.50]
\node(P0)at (0,0){$\tau_{\theta}$};
\node(P1)at (90:2cm){{$\tau$}} ;
\node(P2)at (90+60:2cm){${\tau_\omega}$};
\node(P3)at (90+120:2cm){$\tau_{\theta_\omega}$};
\node(P4)at (90+180:2cm){$\sigma_0$};
\node(P5)at (90+240:2cm){{$\sigma$}};
\node(P6)at (90+300:2cm){$\tau^*$};

\path[commutative diagrams/.cd,every arrow,every label]
(P0)edgenode { } (P1)
(P0)edgenode { } (P2)
(P0)edgenode { } (P3)
(P0)edgenode { } (P4)
(P0)edgenode { } (P5)
(P0)edgenode { } (P6)
(P1)edgenode { } (P2)
(P3)edgenode { } (P2)
(P3)edgenode { } (P1)
(P5)edgenode { } (P4)
(P5)edgenode { } (P6)
(P1)edgenode { } (P6)
;
\draw[-latex,bend right=20]  (P3) edge (P6);
\end{tikzpicture}
\end{center}

\section{New connections}

In the following sections we will find out few more arrows between the left and the right-hand-side of the graph.

\begin{example}\label{EXtauomegasigma}[$\tau_{\theta_\omega}$ and $\sigma$]
In this example we will prove that these topologies are not comparable in general.

Considering properties of these topologies, we have
\begin{eqnarray*}
A \in \sigma & \Leftrightarrow  & \Gamma(X \setminus A) \subset X \setminus A\\
 & \Leftrightarrow  & \{x \in X : \forall U \in \tau (x) \; |\overline{U}^\tau \cap (X \setminus A)|>\aleph_0\} \subseteq X \setminus A  \\
 & \Leftrightarrow  & \{x \in X : \forall U \in \tau (x) \; |\overline{U}^\tau \setminus A|>\aleph_0\} \subseteq X \setminus A  \\
 & \Leftrightarrow  & A \subseteq X \setminus \{x \in X : \forall U \in \tau (x) \; |\overline{U}^\tau \setminus A|>\aleph_0\}  \\
 & \Leftrightarrow  & \forall x \in A \; \exists U \in \tau (x) \; |\overline{U}^\tau \setminus A|\leq \aleph_0.
\end{eqnarray*}
\begin{eqnarray*}
A \in \tau_{\theta_\omega} & \Leftrightarrow  & X \setminus A =\Cl_{\theta_\omega}(X \setminus A)\\
 & \Leftrightarrow  & X \setminus A =\{x \in X: \forall V \in \tau(x)\; \overline{V}^{\omega}\cap (X \setminus   A)\neq \emptyset \}\\
 & \Leftrightarrow  & X \setminus A =\{x \in X: \forall V \in \tau(x)\; \overline{V}^{\omega}\setminus   A\neq \emptyset \}\\
 & \Leftrightarrow  &  A =\{x \in X: \exists V \in \tau(x)\; \overline{V}^{\omega}\setminus   A  = \emptyset \}\\
 & \Leftrightarrow  &  A =\{x \in X: \exists V \in \tau(x)\; \overline{V}^{\omega}\setminus   A  = \emptyset \}\\
 & \Leftrightarrow  &  \forall x \in A \; \exists V \in \tau(x)\; \overline{V}^{\omega}\setminus   A  = \emptyset \}
\end{eqnarray*}

The last result is proved in  \cite[Theorem 2.23]{AlGIrsh}.

Let $X=\{a\} \cup \mathbb{R}$, where $a \not \in \mathbb{R}$.  The topology  $\tau$ on $X$ is given by a base
$$\mathcal{B} =\left\{ \{a\} \right\} \cup \left\{ \{a,r\}: r \in  \mathbb{R} \right\}.$$

We have $\{a\}\in \tau_{\theta_\omega}$, since  it is a countable open set in $\tau$. On the other hand, for each set $L$ containing $a$ we have $r \in \overline{L}^\tau$, for each $ x \in \mathbb{R}$, implying $\overline{L}^\tau=\mathbb{R}$, so $\forall U \in \tau (a) \; |\overline{U}^\tau \setminus A|= \mathfrak{c}>\aleph_0$.

If we take $\mathbb{R}$, since $|X \setminus  \mathbb{R}|=1 \leq \aleph_0$, we have $\mathbb{R}\in \sigma$. On the other hand, each neighbourhood of a real contains $a$, implying that the closure in any topology of that neighbourhood contains $a$. So, $\overline{V}^{\omega}\setminus   \mathbb{R}=\{a\} \neq \emptyset$.

\end{example}

\begin{theorem}
  $\sigma \subseteq \tau_{\omega}  $
\end{theorem}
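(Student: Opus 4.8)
The plan is to compare the two topologies through the point-wise characterizations that are already at hand. For $\tau_\omega$ the paper records
$$A \in \tau_\omega \Leftrightarrow \forall x \in A \; \exists U \in \tau(x) \; |U \setminus A| \leq \aleph_0,$$
while the chain of equivalences worked out in Example~\ref{EXtauomegasigma} rewrites membership in $\sigma$ (for the ideal $\Count$) as
$$A \in \sigma \Leftrightarrow \forall x \in A \; \exists U \in \tau(x) \; |\overline{U}^\tau \setminus A| \leq \aleph_0.$$
So both conditions have exactly the same shape; the only difference is that $\sigma$ controls the cardinality of $\overline{U}^\tau \setminus A$, whereas $\tau_\omega$ only asks for control of $U \setminus A$. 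This makes $\sigma$ look like the formally stronger requirement, which is precisely what should yield the inclusion $\sigma \subseteq \tau_\omega$.

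First I would fix an arbitrary $A \in \sigma$ and an arbitrary point $x \in A$, and invoke the $\sigma$-characterization to produce an open neighbourhood $U \in \tau(x)$ with $|\overline{U}^\tau \setminus A| \leq \aleph_0$. The decisive observation is simply that $U \subseteq \overline{U}^\tau$, whence $U \setminus A \subseteq \overline{U}^\tau \setminus A$ and therefore $|U \setminus A| \leq |\overline{U}^\tau \setminus A| \leq \aleph_0$. Since $x$ was arbitrary, this exhibits for every point of $A$ a neighbourhood witnessing the $\tau_\omega$-condition, and so $A \in \tau_\omega$.

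There is essentially no genuine obstacle beyond aligning the two characterizations: the inclusion reduces to monotonicity of set difference under $U \subseteq \Cl(U)$ together with the fact that a subset of a countable set is countable (property (I1) of the ideal). The one point to handle carefully is to reuse the very same neighbourhood $U$ for both conditions rather than selecting a fresh one; this is immediate, since the weaker $\tau_\omega$-requirement is already satisfied by the witness produced for the stronger $\sigma$-requirement.
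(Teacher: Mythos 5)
Your proof is correct and follows essentially the same route as the paper: both compare the characterization $A \in \sigma \Leftrightarrow \forall x \in A \; \exists U \in \tau(x) \; |\overline{U}^\tau \setminus A| \leq \aleph_0$ (from Example~1) with the definition of $\tau_\omega$, the paper merely saying the inclusion ``directly follows'' where you spell out the witnessing step $U \setminus A \subseteq \overline{U}^\tau \setminus A$.
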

\begin{proof}

In the previous example we have proved that $$A \in \sigma\Leftrightarrow\forall x \in A \; \exists U \in \tau (x) \; |\overline{U}^\tau \setminus A|\leq \aleph_0.$$ On the other hand, directly from the definition of $\tau_\omega$, we have
$$A \in \tau_\omega \Leftrightarrow\forall x \in A \; \exists U \in \tau (x) \; |U \setminus A|\leq \aleph_0.$$
From this directly follows $\sigma \subseteq \tau_\omega$.
\end{proof}
\begin{example}
Let us consider the topology defined in  Example \ref{EXtauomegasigma}. We proved that $\{a\} \not \in \sigma$, but, since $\{a\}$ is in $\tau$, it is also in $\tau_\omega$.
\end{example}

\begin{theorem}
  $\tau_\omega \subseteq \tau^*$
\end{theorem}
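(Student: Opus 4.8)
The plan is to mimic the strategy used for the inclusion $\sigma \subseteq \tau_\omega$: I would unwind the definition of $\tau^*$ into a neighbourhood characterisation that can be placed side by side with the description of $\tau_\omega$ recalled in the subsection on $\omega$-open sets. The first step is to specialise the local function to the ideal $\Count$. For an arbitrary $A \subseteq X$, the condition $A \cap U \not\in \Count$ simply says that $A \cap U$ is uncountable, so
$$A^* = \{x \in X : |A \cap U| > \aleph_0 \mbox{ for each } U \in \tau(x)\}.$$

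Next I would rewrite membership in $\tau^*$ through the starred closure operator. By definition $A \in \tau^*$ iff $\Cl^*(X \setminus A) = X \setminus A$, and since $\Cl^*(B) = B \cup B^*$ this is equivalent to $(X \setminus A)^* \subseteq X \setminus A$. Substituting $B = X \setminus A$ into the formula for the local function above gives $(X \setminus A)^* = \{x \in X : |U \setminus A| > \aleph_0 \mbox{ for each } U \in \tau(x)\}$. Taking the contrapositive of the inclusion $(X \setminus A)^* \subseteq X \setminus A$, that is, requiring every point of $A$ to fail membership in $(X \setminus A)^*$, I obtain
$$A \in \tau^* \Leftrightarrow \forall x \in A \; \exists U \in \tau(x) \; |U \setminus A| \leq \aleph_0.$$

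Comparing this with the characterisation $A \in \tau_\omega \Leftrightarrow \forall x \in A \; \exists U \in \tau(x) \; |U \setminus A| \leq \aleph_0$ recalled for $\omega$-open sets, the two conditions coincide verbatim, and the desired inclusion $\tau_\omega \subseteq \tau^*$ follows immediately.

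I do not expect a genuine obstacle here: the entire content lies in correctly translating the abstract definition of $\tau^*$ (given via the operator $\Cl^*$ and complements) into the concrete neighbourhood form, and in reading ``$\not\in \Count$'' as ``uncountable''. The one point worth flagging is that the resulting characterisations are in fact identical, so this computation actually establishes the equality $\tau_\omega = \tau^*$, the ultimate goal announced in the abstract; if the paper prefers to record only the inclusion at this stage, the reverse containment $\tau^* \subseteq \tau_\omega$ is read off the very same equivalence with no extra work.
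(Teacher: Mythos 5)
Your proposal is correct and follows essentially the same route as the paper: it rewrites $A \in \tau^*$ as $(X \setminus A)^* \subseteq X \setminus A$, computes the local function for $\Count$ as ``$|U \setminus A| > \aleph_0$ for every $U \in \tau(x)$'', and compares the resulting neighbourhood characterisation with the known one for $\tau_\omega$. Your observation that the chain of equivalences yields both inclusions at once matches the paper as well, which derives $\tau^* \subseteq \tau_\omega$ in the next theorem ``by the same conclusion'' and only then records the equality $\tau_\omega = \tau^*$ as a corollary.
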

\begin{proof}
Suppose that $A \in \tau_\omega$. Then
\begin{equation}\label{tauomega}\forall x \in A \; \exists U \in \tau(x) \; |U \setminus A| \leq \aleph_0.\end{equation}
Let us prove that $A \in \tau^*$, i.e.\ $X \setminus A = \Cl^*(X \setminus A)$.

We have
$$X \setminus A = \Cl^*(X \setminus A)\Leftrightarrow X \setminus A = (X \setminus A) \cup  (X \setminus A)^* \Leftrightarrow (X \setminus A)^* \subseteq X \setminus A.$$

\begin{eqnarray*}
x \in (X \setminus A)^* & \Leftrightarrow & \forall U \in \tau(x) \;|(X \setminus A)\cap U| > \aleph_0\\
& \Leftrightarrow & \forall U \in \tau(x)\; |U \setminus A| > \aleph_0.
\end{eqnarray*}
From  \eqref{tauomega} follows that $x \not \in A$, i.e.\ $x \in X \setminus A$, which gives  $X \setminus A = \Cl^*(X \setminus A)$, i.e.\ $A \in \tau^*$.
\end{proof}

\begin{theorem}
  $\tau^* \subseteq \tau_\omega $
\end{theorem}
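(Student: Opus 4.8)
The plan is to show that the defining condition for $A \in \tau^*$, once the ideal is specialized to $\Count$, coincides verbatim with Hdeib's characterization of $\tau_\omega$-openness recalled in the preliminaries; in other words this theorem is essentially the reverse reading of the equivalence chain already used in the proof of $\tau_\omega \subseteq \tau^*$.

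First I would start from the definition of $\tau^*$: a set $A$ belongs to $\tau^*$ exactly when its complement is $\Cl^*$-closed, i.e.\ $\Cl^*(X\setminus A)=X\setminus A$, which, since $\Cl^*(B)=B\cup B^*$, is equivalent to $(X\setminus A)^*\subseteq X\setminus A$. Next I would unwind the local function for the ideal $\Count$. Because $B\in\Count$ means precisely $|B|\leq\aleph_0$, for every point $x$ we have $x\in(X\setminus A)^*$ iff for each $U\in\tau(x)$ the set $U\setminus A=(X\setminus A)\cap U$ is uncountable, i.e.\ $|U\setminus A|>\aleph_0$. This is the very same computation that occurs in the previous theorem.

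Finally I would pass to the contrapositive of the inclusion $(X\setminus A)^*\subseteq X\setminus A$. This inclusion asserts that no point of $A$ lies in $(X\setminus A)^*$, which means that for each $x\in A$ it is false that every $U\in\tau(x)$ satisfies $|U\setminus A|>\aleph_0$; equivalently, each $x\in A$ admits some $U\in\tau(x)$ with $|U\setminus A|\leq\aleph_0$. By the characterization $A\in\tau_\omega\Leftrightarrow\forall x\in A\;\exists U\in\tau(x)\;|U\setminus A|\leq\aleph_0$ this says exactly that $A\in\tau_\omega$, giving $\tau^*\subseteq\tau_\omega$.

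I do not expect a genuine obstacle here: the argument is a direct unwinding of definitions followed by a single contraposition, and the only point requiring care is correctly negating the universal quantifier hidden in the definition of the local function. Combined with the previous theorem $\tau_\omega\subseteq\tau^*$, this inclusion upgrades to the equality $\tau^*=\tau_\omega$, which is the ultimate goal announced in the abstract.
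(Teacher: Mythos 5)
Your proposal is correct and follows essentially the same route as the paper: unwind $A\in\tau^*$ to $(X\setminus A)^*\subseteq X\setminus A$, compute the local function for $\Count$ exactly as in the proof of $\tau_\omega\subseteq\tau^*$, and then contrapose to recover Hdeib's characterization $\forall x\in A\;\exists U\in\tau(x)\;|U\setminus A|\leq\aleph_0$. The quantifier negation, which you rightly flag as the only delicate step, is handled correctly, so nothing is missing.
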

\begin{proof}By the same conclusion as in the previous theorem, we have that
$$A \in \tau^* \Leftrightarrow \forall x \in X \; \left ( \forall U \in \tau(x)\; |U \setminus A| > \aleph_0 \right ) \Rightarrow x \not \in A.$$
So, if $x \in A$, then $\exists U \in \tau(x) \; |U \setminus A| \leq \aleph_0$, implying $A \in \tau_\omega$.
\end{proof}

\begin{cor}
  $\tau_\omega = \tau^*$
\end{cor}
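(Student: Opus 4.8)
The plan is to read the equality straight off the two theorems that immediately precede it. The first establishes $\tau_\omega \subseteq \tau^*$ and the second establishes the reverse inclusion $\tau^* \subseteq \tau_\omega$; since set inclusion is antisymmetric, combining the two containments yields $\tau_\omega = \tau^*$ at once. No further argument is required, so I would simply invoke both inclusions and conclude.

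For completeness I would also record the conceptual reason why the two topologies coincide, which is already implicit in both proofs: specializing the ideal to $\Count$ turns the membership condition in the local function into a pure cardinality condition. Concretely, for $B \subseteq X$ one has $B \notin \Count$ precisely when $|B| > \aleph_0$, so
$$x \in (X \setminus A)^* \Leftrightarrow \forall U \in \tau(x)\; |U \setminus A| > \aleph_0.$$
Consequently $A \in \tau^*$, i.e.\ $(X \setminus A)^* \subseteq X \setminus A$, unfolds to
$$\forall x \in A\; \exists U \in \tau(x)\; |U \setminus A| \leq \aleph_0,$$
which is exactly Hdeib's defining criterion for $A \in \tau_\omega$. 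Thus the two membership conditions are literally identical, and the equality is forced rather than being merely a two-sided inclusion that happens to hold. There is no genuine obstacle here: the only real work, namely isolating this common criterion, was already carried out in the two preceding theorems, and the corollary is their immediate formal consequence.
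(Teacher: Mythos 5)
Your proposal is correct and matches the paper exactly: the corollary is stated there without proof, as the immediate conjunction of the two preceding theorems $\tau_\omega \subseteq \tau^*$ and $\tau^* \subseteq \tau_\omega$. Your supplementary observation that both theorems reduce membership to the same criterion $\forall x \in A\; \exists U \in \tau(x)\; |U \setminus A| \leq \aleph_0$ is also accurate and faithfully reflects the arguments already given in the paper.
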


\section{The final graph}

Considering relations between topologies, we have the following graph.

Besides the results obtained in the previos section, we know that there are no arrows between $\tau_{\theta_\omega}$ and $\sigma$. Also due transitivity of inclusion, there is no arrow from $\sigma_0$ to $\tau_{\theta_\omega}$. For the other direction, we do not know. Considering relations betwee $\sigma$ and $\tau$, Al-Omari and Noiri \cite{ON} showed that those two are not comparable. Also, they proved that $\tau$ is not included in $\sigma_0$.

So, as open questions, it remains to check if $\sigma_0 \subseteq \tau$,   $\tau_{\theta_\omega} \subseteq \sigma_0$ and relations between $\sigma_0$ and $\tau^*$, to complete the graph of relations.

\begin{center}
\begin{tikzpicture}[commutative diagrams/every diagram, scale=1.50]
\node(P0)at (0,0){$\tau_{\theta}$};
\node(P1)at (90+72:2cm){{$\tau$}} ;
\node(P2)at (90+0:2cm){${\tau_\omega}=\tau^*$};
\node(P3)at (90+144:2cm){$\tau_{\theta_\omega}$};
\node(P4)at (90+216:2cm){$\sigma_0$};
\node(P5)at (90+288:2cm){{$\sigma$}};
\node(P6)at (90+270:2cm){};

\path[commutative diagrams/.cd,every arrow,every label]
(P0)edge node { } (P1)
(P0)edge node { } (P2)
(P0)edge node { } (P3)
(P0)edge node { } (P4)
(P0)edge node { } (P5)
(P0)edge node { } (P2)
(P1)edge node { } (P2)
(P3)edge node { } (P2)
(P3)edge node { } (P1)
(P5)edge node { } (P4)
(P5)edge node { } (P2)
(P1)edge node { } (P2)

;

\draw[thick,loosely dashed,red,<->]  (P5) edge (P3);
\draw[thick,loosely dashed,red,<->]  (P1) edge (P5);
\draw[thick,loosely dotted, blue, bend right=10, ->] (P4) edge (P1);
\draw[thick,loosely dotted, blue, bend right=10, ->] (P3) edge (P4);
\draw[thick,loosely dashed, red, bend right=10, ->] (P4) edge (P3);
\draw[thick,loosely dashed, red, bend right=10, ->] (P1) edge (P4);
\draw[thick,loosely dotted,blue,<->]  (P4) edge (P2);
\end{tikzpicture}
\end{center}

By black full arrows are represented relations $\subsetneq$ (in general). By red dashed arrows are represented relations $\not \subset$ (in general), and by dotted blue arrows are represented relations which still have ot be investigated.

\end{document}